\documentclass[11pt]{article}

\usepackage[leqno]{amsmath}
\usepackage{amssymb,amsfonts,xfrac,MnSymbol}
\usepackage{amsthm}
\usepackage{hyperref}
\usepackage[capitalise]{cleveref}
\usepackage[textsize=tiny]{todonotes}
\setuptodonotes{fancyline, color=blue!30}
\usepackage{enumitem}
\usepackage{graphicx}
\usepackage{tikz-cd, tikz}
\usepackage{color}
\usepackage{import}
\usepackage{float}
\usepackage{caption}
\usepackage{subcaption}

\numberwithin{equation}{section}

\newtheorem{theorem}{Theorem}[section]

\newtheorem{lemma}[theorem]{Lemma}
\newtheorem{corollary}[theorem]{Corollary}

\newtheorem*{theorem*}{Theorem}
\newtheorem*{claim*}{Claim}
\newtheorem*{proposition*}{Proposition}
\newtheorem*{lemma*}{Lemma}
\newtheorem*{corollary*}{Corollary}

\theoremstyle{definition}
\newtheorem{definition}[theorem]{Definition}

\newtheorem{remark}[theorem]{Remark}
\newtheorem{example}[theorem]{Example}
\newtheorem{question}[theorem]{Question}

\newtheorem*{definition*}{Definition}
\newtheorem*{observation*}{Observation}
\newtheorem*{remark*}{Remark}
\newtheorem*{example*}{Example}
\newtheorem*{question*}{Question}
\newtheorem*{exercise*}{Exercise}
\newtheorem*{fact*}{Fact}
\newtheorem*{notation*}{Notation}

\newcommand{\bbC}{\mathbb{C}}

\newcommand{\bbN}{\mathbb{N}}

\newcommand{\bbQ}{\mathbb{Q}}

\newcommand{\bbZ}{\mathbb{Z}}

\DeclareMathOperator{\image}{Im}

\DeclareMathOperator{\supp}{Supp}

\DeclareMathOperator{\FP}{FP}

\newcommand{\set}[1]{\left\{ #1 \right\}}

\newcommand*{\claimproofname}{Proof}

\usepackage{amssymb,amsfonts,xfrac,MnSymbol}
\crefname{cond}{condition}{conditions}
\creflabelformat{cond}{#2#1\@#3}
\crefname{obs}{observation}{observations}
\creflabelformat{obs}{#2#1\@#3}

\usepackage{commath}
\usepackage{authblk}
\usepackage{blindtext}
\usepackage{enumitem}
\usepackage{soul}
\usepackage[final]{pdfpages}
\usepackage[style=alphabetic]{biblatex}
\usepackage{graphicx} 
\usepackage{xcolor}
\usepackage{mathrsfs}

\makeatletter
\newtheorem*{rep@theorem}{\rep@title}
\newcommand{\newreptheorem}[2]{%
\newenvironment{rep#1}[1]{%
 \def\rep@title{#2 \ref{##1}}%
 \begin{rep@theorem}}%
 {\end{rep@theorem}}}
\makeatother

\newreptheorem{theorem}{Theorem}

\title{On Finiteness of Homological Isoperimetric Functions on Top Dimensions}
\date{}
\author{Eduardo Mart\'inez-Pedroza and Diana Vizcaíno Torres}

\begin{document}

\maketitle

\begin{abstract}
We address a question from \cite{BKV25} regarding the finiteness of the homological $R$-isoperimetric function. 
Let $R$ be a subfield of the complex numbers $\mathbb{C}$ with the absolute value norm.
We prove that for any group $G$ that admits a finite $(n+1)$-dimensional model for $K(G,1)$, the homological $n$-isoperimetric function  of $G$ over $R$ is either linear or takes infinite values. 
In particular, by results of Gersten and Mineyev, in the class of groups admitting a finite $2$-dimensional classifying space, the homological $1$-dimensional isoperimetric function over $R$ only captures hyperbolicity. This follows as a particular case of a more general result proved in this note. 
\end{abstract}

\section{Introduction}

Let us start by recalling the notation used by Bader, Kropholler, and Vankov in~\cite{BKV25}. Let $R$ be a normed ring and let $|\cdot|$ denote its norm. We are  particularly interested in the case that the rational numbers $\mathbb{Q}$ is a subring of $R$ and $|qr|=|q||r|$ for any $q\in \mathbb{Q}$ and $r\in R$; in this case we say that $R$ is a \emph{normed ring that respects the rationals}. This includes the case that $R$ is any subfield of $\bbC$ with the norm arising as the restriction of the absolute value.
 
All modules  considered in this note are left modules. Given a free $R$-module $F$ with fixed free basis $\Lambda$,  the {\em $\ell^1$-norm} on $F$ is defined as
\begin{equation}\label{eq:freenorm}
    \begin{split}
        \left|\sum_{x\in\Lambda}a_x x\right|=\sum_{x\in\Lambda}|a_x|.
    \end{split}
\end{equation}
Abusing notation, we denote the $\ell_1$-norm on $F$ and the norm on $R$ by $|\cdot|$. An important observation for this note is that if the normed ring $R$ respects the rationals, then 
the $\ell_1$-norm on any free $R$-module $F$ satisfies  
\begin{equation}\label{eq:freenorm}
    \begin{split}
        |qa|=|q||a|.
    \end{split}
\end{equation}
for any $q\in \mathbb{Q}$ and $a\in F$.

Let $G$ be a discrete group and let 
 $RG$ be the group ring.
 Regard $RG$ as a free $R$-module with free basis $G$, and equip it with the $\ell^1$-norm induced by the norm on $R$.  Given a free $RG$-module $M$ with basis $\Lambda$, we endow it with the $\ell^1$-norm from $RG$. This is equivalent to consider $M$ as a free $R$-module with basis $\Lambda \times G$ and endow it with the $\ell^1$-norm from $R$.   
 
 Recall that the group  $G$ is of {\em type $\FP_n(R)$} if there exists a projective resolution $$\dots\to P_n\to \dots\to P_0\to R\to 0$$ where $P_i$ are finitely generated projective $RG$-modules for $i\leq n$. A projective resolution $(C_k,\partial_k)$ of the trivial $RG$-module $R$ is said to be \emph{$n$-admissible} if $C_n,C_{n+1}$ are finitely generated free $RG$ modules equipped with fixed free bases. Note that if  $G$ is of type $FP_{n+1}(R)$ then there exists an $n$-admissible projective resolution of the trivial $RG$-module $R$,  see for example \cite[Prop. VIII.4.3]{brown}.

Any finitely generated $RG$-module  $M$ admits   a collection of canonical norms called \emph{filling norms} which are defined as follows.  Let $\pi \colon F\to M$ be a surjective homomorphism of $RG$-modules, where $F$ is a free $RG$-module with a fixed free basis inducing an $\ell_1$-norm $|\cdot|$. The \emph{filling norm} on $M$ with respect to $\pi$ and $|\cdot|$ is defined as
$$
||m||:= \inf_{x\in F,\pi(x)=m}{|x|}.
$$
Any two  filling norms $\|\cdot\|$ and $\|\cdot\|'$ on $M$ are equivalent, that is, there exists a constant $C>0$ such that for all $m\in M$ we have
$$
C^{-1}||m||\leqslant ||m||'\leqslant C||m||,
$$
for a proof see~\cite[Corollary 2.5]{MA20}. From here on, let $\mathbb{R}$ and $\mathbb{R}_{\geq0}$ denote the sets of real numbers and non-negative real numbers respectively. 

\begin{definition}\cite{BKV25}\label{def:main} [Homological $R$-isoperimetric function]
Given an $n$-admissible projective resolution $(C_k,\partial_k)$ of the trivial $RG$-module $R$ we have that $\partial_{n+1}\colon C_{n+1}\twoheadrightarrow \image(\partial_{n+1})$ and $C_{n+1}$ is free, so we can consider the corresponding filling norm, $||\cdot||$, that the surjection induces on $\image(\partial_{n+1})$. 
We define the \emph{$n$-isoperimetric function of $(C_k,\partial_k)$ (with respect to $|\cdot|$)} to be the function $f_n\colon\mathbb{R}_{\geq 0}\to\mathbb{R}_{\geq0}\cup\set{\infty}$ defined by
\begin{align*}
f_n(l)&=\sup\{\|b\|\mid b\in \image(\partial_{n+1}),|b|\leq l\}\\
&=\sup\{\inf\set{|c| : \partial_{n+1}(c)=b} \mid b\in \image(\partial_{n+1}),|b|\leq l\}.
\end{align*}
\end{definition}

We say that the homological isoperimetric function $f_n$ \emph{takes only finite values} if $f_n(l)<\infty$ for every $l\in \mathbb{R}_{\geq0}$.  
If the homological isoperimetric function associated to an $n$-admissible resolution of $G$ takes only finite values, then homological isoperimetric function associated to any other $n$-admissable resolution of $G$ takes only finite values, see~\cite[Lemma 2.12]{BKV25}. 
However, it is not obvious in which cases the homological isoperimetric function takes only finite values. It is know that $f_n$ takes only finite values in the case that $R = \bbZ$ with the usual norm and  $G$ is $FP_n(\bbZ)$, see~\cite[Theorem 1.3]{FlemMart}. In this note we partially address the following question. 

\begin{question}\cite{BKV25}
    For which norms $|\cdot|$ on $R$ does the $n$-isoperimetric function $f_n$ take only finite values?
\end{question}

Now we introduce some terminology. Recall that if $S$ is a $G$-set, the \emph{permutation $RG$-module} $R[S]$ is the free $R$-module with basis $S$ such that the action of an element \(g\in G\) on a basis element \(s\in S\) is defined by the set action \(g\cdot s\), and this   extends \(R\)-linearly to the entire module. The support $\supp(\mu)$ of an element $\mu\in R[S]$ is the subset of elements of $S$ that have a non-zero coefficient in the linear combination defining $\mu$. In the case that the $G$-stabilizer of each $s\in S$ is finite, we say that $R[S]$ is a \emph{proper permutation $RG$-module}. Note that   $S$ has finitely many $G$-orbits if and only if $R[S]$ is a finitely generated $RG$-module. The $\ell_1$-norm on $R[S]$ induced by the free $R$-basis $S$ is called a \emph{standard} $\ell_1$-norm for $R[S]$ and it's denoted by $|\cdot|$.

A group $G$ is called  \emph{$(n+1)$-dimensional admissible over $R$} if there exists a projective resolution  $(C_k,\partial_k)$ of the trivial $RG$-module $R$ such that $C_n$ and $C_{n+1}$ are finitely generated proper permutation modules endowed with standard $\ell_1$-norms and such that $\partial_{n+1}\colon  C_{n+1} \to C_n$ is injective.  

To illustrate the definition observe that any of the following two assumptions implies that $G$ admits an $(n+1)$-dimensional admissible resolution over any $R$: 
\begin{itemize}
    \item $G$ admits an $(n+1)$-dimensional $K(G,1)$ with finitely many cells.
    \item $G$ admits an $(n+1)$-dimensional  cocompact model for $\underline{E}G$.
\end{itemize}
A function $f\colon \mathbb{R}_{\geq0} \to \mathbb{R}_{\geq0}\cup\{\infty\}$ is \emph{bounded by a linear function} if there is $C\in \mathbb{R}$ such that $f(l)\leq Cl+C$ for every $l\in \mathbb{R}_{\geq0}$. The main result of this note is the following.
\begin{theorem}\label{thm:main}
    Let $R$ be a normed division ring that respects the rationals. Let $G$ be an $(n+1)$-dimensional admissible group over $R$.  Then the homological isoperimetric function $f_n$ with respect to $R$ takes only finite values if and only if $f_n$ is bounded by a linear function.
\end{theorem}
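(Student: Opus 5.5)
The plan is to reduce the statement to a scaling argument in a setting where $\partial_{n+1}$ is injective. First I would work directly with the admissible resolution $(C_k,\partial_k)$ given by the hypothesis. Here $C_n=R[S_n]$ and $C_{n+1}=R[S_{n+1}]$ are finitely generated proper permutation modules with standard $\ell_1$-norms, and $\partial_{n+1}$ is injective. I define $g_n(l)=\sup\{|c| : c\in C_{n+1},\ |\partial_{n+1}c|\le l\}$. Because $\partial_{n+1}$ is injective, each $b\in\image(\partial_{n+1})$ has a unique preimage $c$, so the filling norm is $\|b\|=|c|$ and no infimum is involved.

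The core step is a homogeneity argument, which is where respect for the rationals enters. For $q\in\bbQ$ we have $\partial_{n+1}(qc)=q\,\partial_{n+1}(c)$, and by the observation after the definition of the $\ell_1$-norm, $|qc|=|q||c|$ and $|qb|=|q||b|$. Suppose $g_n(1)=A<\infty$, and take $b=\partial_{n+1}c\neq 0$. Choose a rational $q$ with $|b|\le q\le 2|b|$. Then $|b/q|\le 1$, so $|c/q|\le A$, and hence $|c|\le qA\le 2A|b|$. It follows that $g_n(l)\le 2Al$, which is linear. The converse, that a linear bound gives finite values, is trivial. Injectivity is essential here: without it, the preimage of $qb$ that realizes the filling norm need not be $q$ times a near-optimal preimage of $b$. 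Actually the infimum is still homogeneous, so this step only needs the division ring structure to ensure $1/q\in R$; the genuine use of injectivity comes next.

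Next I would transfer the statement from $g_n$ to the function $f_n$ of an honest $n$-admissible (free) resolution. Each stabilizer $H$ is finite and $|H|^{-1}\in\bbQ\subseteq R$, so $R[G/H]$ is a direct summand of $RG$ via $gH\mapsto g\cdot\frac{1}{|H|}\sum_{h\in H}h$. Under this embedding and the projection $RG\to R[G/H]$, the standard $\ell_1$-norm is bi-Lipschitz to the $\ell_1$-norm of $RG$ restricted to the summand, with constants depending only on $|H|$. Since there are finitely many orbits, the constants are uniform. I would then modify the resolution near degrees $n,n+1$ by adding finitely generated projective complements, using identity maps as in an Eilenberg-type stabilization, so that $C_n$ and $C_{n+1}$ become free. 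The resulting $f_n$ then agrees with $g_n$ up to multiplicative constants, and up to additive constants coming from the added trivial summands. Finally, \cite[Lemma 2.12]{BKV25}, together with the equivalence of filling norms \cite[Corollary 2.5]{MA20}, shows that both finiteness and linear boundedness are independent of the chosen $n$-admissible resolution.

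The main obstacle I expect is this last transfer step. The proof must produce free modules without destroying the relevant control. It must also check that a linear bound, and not merely finiteness, passes through the comparison of resolutions. My first attempt would be to track the chain homotopy equivalence constants explicitly, checking that the maps involved are bounded linear operators for the $\ell_1$-norms. That would show that $f_n\le C\,g_n(C\,\cdot)+C$ and conversely.
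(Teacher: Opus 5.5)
Your scaling argument is correct, and it takes a genuinely different and much shorter route than the paper. The paper argues by contradiction. It chooses near-extremal pairs $(\alpha_k,\mu_k)$ with $|\alpha_k|\le n_k$ and $\|\alpha_k\|>kn_k-\epsilon$. It then uses properness and the infiniteness of $G$ (Lemmas~\ref{G:fin:FV:lin:bounded} and~\ref{pairwise:disjoint:cycles}) to translate these pairs to pairwise disjoint supports. Finally, additivity of $\ell_1$-norms on disjoint supports and injectivity of $\partial_{n+1}$ show that $\nu_l=\frac{1}{l}\sum_{k\le l}\frac{1}{n_k}\mu_k$ has $|\partial_{n+1}\nu_l|\le 1$ but $\|\partial_{n+1}\nu_l\|\ge\frac{l+1}{2}-2\epsilon$, which forces $f_n(1)=\infty$.

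Your key observation is that the filling norm is itself $\mathbb{Q}$-homogeneous: $\|qb\|=|q|\,\|b\|$ for $0\neq q\in\mathbb{Q}$, because $c\mapsto qc$ is a bijection $\partial_{n+1}^{-1}(b)\to\partial_{n+1}^{-1}(qb)$ that multiplies $\ell_1$-norms by $|q|$. This gives $f_n(l)\le l\,f_n(1)$ at once, and in fact $f_n(l)=l\,f_n(1)$ for all $l>0$. It uses no properness, no infiniteness of $G$, no injectivity of $\partial_{n+1}$, and not even the division ring hypothesis, since $1/q\in R$ is already part of respecting the rationals. The only input is $|qa|=|q|\,|a|$ with $|q|$ the usual absolute value on $\mathbb{Q}$, which the paper uses as well.

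The paper's own construction already contains your rescaling. The single element $\alpha_k/n_k$ has $\ell_1$-norm at most $1$ and filling norm greater than $k-\epsilon/n_k$, so the disjoint-support averaging is not needed even there. Your approach therefore shows that the $(n+1)$-dimensional admissibility hypothesis is superfluous for the stated dichotomy. It also answers the question posed in the introduction affirmatively for every group of type $\FP_{n+1}(R)$, under the same convention on $|q|$. The paper's extra machinery buys nothing additional for this result.

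For the same reason, you should delete your transfer paragraph rather than try to complete it. Run the scaling argument directly on the $f_n$ of an arbitrary $n$-admissible free resolution, with its filling norm, instead of on the auxiliary $g_n$ of the permutation-module resolution. As written, that paragraph is the one unfinished part of your proposal. The stabilization, the bi-Lipschitz comparisons, and the claim that linear bounds survive a change of resolution are only sketched. Also, contrary to your remark, injectivity is not actually used there. With the direct argument this comparison problem never arises. It also sidesteps a point the paper passes over silently: the paper computes $f_n$ from a resolution whose $C_n$ and $C_{n+1}$ are permutation modules rather than free modules with fixed bases.
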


The following example illustrates the idea of the theorem.
\begin{example}\cite[Remark 3.8]{MA20}
Consider the group presentation $G = \langle x,y| [x,y]\rangle$ and let  $X$ be the universal cover of the presentation complex. The chain complex of $X$ with rational coefficients yields a 2-admissible projective resolution
\[  0\to C_2(X,\bbQ) \xrightarrow[]{\partial_2} C_1(X,\bbQ) \xrightarrow[]{\partial_1} C_0(X, \bbQ) \to \bbQ \to 0.\]
Consider the rational 1-cycle $a_n= \frac{1}{4n}[x^n,y^n]$ for $n \in \bbN$. Then for the $\ell_1$-norm we have that $|a_n|= 1$ and for the filling norm $\|a_n\|=\frac{1}{4}n$. It follows that 
\[ \sup\{\|\gamma\|  \colon \gamma \in \image (\partial_2),  | \gamma  |  \leq 1\} = \infty,\]
and hence the  homological isoperimetric function $f_1$ takes infinite values.
\end{example}

It is known by classical results of Gersten and Mineyev \cite{Mi02}, that if for a group $G$, the homological isoperimetric function $f_1$ over  $R = \mathbb{Q}, \mathbb{R},$ or $\mathbb{C}$ with the standard norm $|\cdot|_{\textrm{abs}}$ is linear, then $G$ is hyperbolic. 

\begin{corollary}
Let $G$ be a  group that admits a $2$-dimensional model for $K(G,1)$, and let $R = \mathbb{Q}, \mathbb{R},$ or $\mathbb{C}$ with the standard norm $|\cdot|_{\textrm{abs}}$. Then $f_1$ takes only finite values if and only if $G$ is hyperbolic.
\end{corollary}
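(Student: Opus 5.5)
The plan is to combine \cref{thm:main} with the theorem of Gersten and Mineyev quoted just before the corollary. I read the hypothesis as asking for a \emph{finite} $2$-dimensional model $X$ for $K(G,1)$, as in the abstract. This finiteness is what makes the cellular chain complex admissible, and the argument needs it.

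\emph{Setting up the resolution.} Let $\widetilde X$ be the universal cover of $X$. Its cellular chain complex with coefficients in $R$,
\[ 0\to C_2(\widetilde X,R)\xrightarrow{\partial_2} C_1(\widetilde X,R)\xrightarrow{\partial_1} C_0(\widetilde X,R)\to R\to 0, \]
is a resolution of the trivial $RG$-module $R$, because $\widetilde X$ is contractible.

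1. Each $C_k$ is a finitely generated free $RG$-module with basis given by lifts of the cells of $X$. Free modules with trivial stabilizers are proper permutation modules, so the cellular bases give standard $\ell_1$-norms.
2. Since $C_3=0$ and the complex is exact, $\partial_2$ is injective.
3. Hence $G$ is $2$-dimensional admissible over $R$ with $n=1$, and the resolution is $1$-admissible.
4. Each of $R=\bbQ,\mathbb{R},\bbC$ is a subfield of $\bbC$ with the absolute value. It is therefore a normed division ring that respects the rationals, so \cref{thm:main} applies.
5. By \cite[Lemma 2.12]{BKV25}, finiteness of $f_1$ does not depend on the chosen $1$-admissible resolution. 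I will also use the standard fact from the same comparison argument that being bounded by a linear function is likewise independent of the resolution, since chain maps between admissible resolutions distort both norms by at most linear amounts.

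\emph{($\Rightarrow$)} Suppose $f_1$ takes only finite values. By \cref{thm:main}, $f_1$ is bounded by a linear function, so $G$ satisfies a linear homological isoperimetric inequality for $1$-cycles over $R$ in the $\ell_1$-norm. By the result of Gersten and Mineyev \cite{Mi02}, $G$ is hyperbolic. For $R=\mathbb{R}$ or $\bbC$ this is exactly their statement. For $R=\bbQ$, real cycles in a finite-rank situation can be approximated by rational ones, so the rational inequality should already suffice; in any case the paper quotes the theorem as covering all three fields.

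\emph{($\Leftarrow$)} Suppose $G$ is hyperbolic. Mineyev's converse says hyperbolic groups satisfy a linear $\ell_1$ homological isoperimetric inequality for $1$-cycles over $\mathbb{R}$, hence also over $\bbQ$ and $\bbC$. For $\bbC$, split a cycle into its real and imaginary parts, at the cost of a factor of $2$. This gives a linear bound on $f_1$ for some admissible resolution, for example one built from a Rips complex. By the invariance in step 5, $f_1$ for the cellular resolution of $\widetilde X$ is also linearly bounded, and in particular finite.

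\emph{Main obstacle.} The real work, namely that finite implies linear, is contained in \cref{thm:main}. What remains delicate is the bookkeeping: checking that Mineyev's inequality, stated for a specific complex and for real coefficients, transfers to the cellular resolution and to each of the three fields. The key point is that linear boundedness of $f_1$ is preserved under comparing admissible resolutions. I would verify this by composing chain maps and chain homotopies between the two resolutions, all of which are bounded operators between finitely generated free modules with $\ell_1$-norms.
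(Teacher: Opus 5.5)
Your proposal is correct and follows the paper's route. The paper gives no separate proof: it presents the corollary as the immediate combination of Theorem~\ref{thm:main}, applied to the cellular resolution of the universal cover of a finite $2$-dimensional $K(G,1)$ (where $\partial_2$ is injective because $C_3=0$), with the Gersten--Mineyev characterization of hyperbolicity by a linear $f_1$. Your extra bookkeeping fills in what the paper leaves implicit and is sound: reading the hypothesis as a finite $K(G,1)$ as in the abstract, noting that the comparison argument preserves linear boundedness and not just finiteness, and passing between $\bbQ$, $\mathbb{R}$ and $\bbC$. Your $\bbQ$-to-$\mathbb{R}$ approximation remark can be made precise by writing a real cycle as a nonnegative combination of rational cycles in the rational cone of cycles with the same support and sign pattern, on which the $\ell_1$-norm is additive.
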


The question of whether  the (growth rate) class of homological isoperimetric functions 
 are  
quasi-isometry invariants of the group has been an object of recent study. In the case that $R=\bbZ$, this was proved  to be true for groups of type $F_n$ in the PhD thesis of Fletcher \cite{Fle98}, see also the work by 
Young \cite{You11}. Recent work of Bader, Kropholler, and Vankov \cite{BKV25} shows
that it is a quasi-isometry invariant for groups of type $FH_n(R)$, provided the
functions take only finite values. Recently, Weis proved that in the class of groups of type $FP_n(R)$ whose homological $n$-isoperimetric functions take only finite values,  the (growth rate) of the homological $n$-isoperimetric functions is quasi-isometry invariant~\cite[Theorem 5.4]{W26}. The following statement is an immediate corollary of our main result.

 
\begin{corollary}
Let $R$ be a normed ring that respects the rationals, and let $G$ and $H$ be quasi-isometric groups in the class of $(n+1)$-dimensional admissible groups over $R$. Assume that their homological $n$-isoperimetric functions over $R$,  $f_n^G$ and $f_n^H$, take only finite values. Then $f_n^G$ and $f_n^H$ define  the same (growth rate) class. 
\end{corollary}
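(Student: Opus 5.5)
The plan is to combine \cref{thm:main} with a scaling argument, so that both functions end up in the linear growth class; Weis's quasi-isometry invariance \cite[Theorem 5.4]{W26} is needed only in a degenerate case.

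\textbf{Upper bounds.} Since $G$ and $H$ are $(n+1)$-dimensional admissible over $R$ and $f_n^G$, $f_n^H$ take only finite values, \cref{thm:main} gives constants $C_G, C_H$ with $f_n^G(l)\leq C_G l + C_G$ and $f_n^H(l)\leq C_H l+C_H$. So both functions are dominated by a linear function. For the growth rate class it remains to get a matching lower bound, or to show the function is degenerate.

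\textbf{Lower bounds.} Fix an $(n+1)$-dimensional admissible resolution for $G$ with its filling norm $\|\cdot\|$ on $\image(\partial_{n+1})$.
\begin{itemize}
\item Because $R$ respects the rationals, $|qx|=|q||x|$ on free modules. If $\partial_{n+1}(c)=qb$ with $q\neq 0$ rational, then $\partial_{n+1}(q^{-1}c)=b$. Hence $\|qb\|=|q|\,\|b\|$ for every $b\in\image(\partial_{n+1})$.
\item If $\image(\partial_{n+1})\neq 0$, pick $b\neq 0$ in it. By injectivity of $\partial_{n+1}$, $b=\partial_{n+1}(c)$ for a unique $c\neq 0$, so $\|b\|=|c|>0$.
\item Taking $q=l/|b|$ for rational $l$ gives $f_n^G(l)\geq \frac{\|b\|}{|b|}\,l$. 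By monotonicity, up to the usual equivalence of growth classes, this holds for all $l$.
\item So $f_n^G$ is at least linear, and together with the upper bound it lies exactly in the linear class.
\end{itemize}
If instead $C_{n+1}=0$, then $f_n^G\equiv 0$. The same dichotomy holds for $H$.

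\textbf{Matching the cases (the expected obstacle).} It remains to rule out one of $f_n^G, f_n^H$ being identically zero while the other is linear. Here I would invoke Weis's quasi-isometry invariance \cite[Theorem 5.4]{W26}, which applies once both functions take only finite values. This requires checking that admissible groups satisfy its finiteness hypothesis:
\begin{itemize}
\item Proper permutation modules $R[S]$ with finite stabilizers are projective over $RG$ when $\mathbb{Q}\subseteq R$, since stabilizer orders are invertible and one averages.
\item So $C_n$ and $C_{n+1}$ are finitely generated projective.
\item One still has to arrange finite generation in lower degrees, or observe that Weis's argument only uses degrees $n$ and $n+1$.
\end{itemize}
If that verification fails, I would argue the degenerate case directly: vanishing of the top-dimensional homological filling data should be detectable coarsely, for instance as vanishing of $n$-cycles with bounded support. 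Pinning this down carefully is the step I expect to be the hardest.
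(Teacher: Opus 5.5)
Your first step is already the whole proof, and it is exactly the paper's argument. The proposal is incomplete as it stands only because you read ``growth rate class'' too finely, which leaves you with a final step that is spurious.

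Filling functions are compared by $f\preceq g$ if and only if there is $C>0$ with $f(l)\le C\,g(Cl+C)+Cl+C$ for all $l$. This is the ``usual equivalence'' you yourself appeal to in your lower-bound step. If $f(l)\le Cl+C$, then $f\preceq g$ for \emph{every} $g\geq 0$. So the linearly bounded functions, including $f\equiv 0$, form a single class, and every function dominates it. Applying \cref{thm:main} to $G$ and to $H$ therefore puts $f_n^G$ and $f_n^H$ in the same class at once; this is why the paper calls the statement an immediate corollary. No lower bound, no case analysis and no appeal to Weis's theorem is needed, and the quasi-isometry hypothesis is not even used.

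The step you flag as the main obstacle is not merely hard. In the finer sense you intend, which separates $0$ from linear, it is false, because that distinction depends on the chosen resolution and not on the group. Add a cancelling summand $RG\xrightarrow{\mathrm{id}}RG$ in degrees $n+1$ and $n$ to an admissible resolution. The result is still admissible: the modules are still finitely generated proper permutation modules and $\partial_{n+1}$ is still injective. But $\image(\partial_{n+1})$ is now nonzero.

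Concretely, take $n=1$ and $G=\mathbb{Z}=\langle t\rangle$. The resolution $0\to R\mathbb{Z}\xrightarrow{t-1}R\mathbb{Z}\to R\to 0$ has $C_2=0$ and gives $f_1\equiv 0$. On the other hand, the presentation complex of $\langle x,y\mid y\rangle$ is a finite $2$-dimensional $K(\mathbb{Z},1)$. The cellular chains of its universal cover have $\partial_2$ injective with $\|b\|=|b|$ on the image, so $f_1(l)=l$ when $R\subseteq\mathbb{C}$.

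Consequently any quasi-isometry invariance statement, Weis's included, must be up to an equivalence that identifies $0$ with linear functions. No coarse-geometric argument can separate your two cases either. Note also that if Weis's theorem did apply to these groups, it would give the corollary outright and make \cref{thm:main} superfluous. Simply stop after the upper bounds.
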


In our main result on the homological isoperimetric function $f_n$, the cohomological dimension of $G$ with respect to $R$ is at most $n+1$. So our result only considers the top homological isoperimetric function of the group, and this raises the following question.

\begin{question}
Let $R$ be a normed ring that respects the rationals and $G$ a group of type $FP_{n+1}(R)$. Assuming that the  homological $n$-isoperimetric function $f_n$ over $R$ takes only finite values, is $f_n$ is bounded by a linear function?
\end{question}

Let us remark that in~\cite[Definition 3.2]{MA20}, there is a small modification of the Definition~\ref{def:main}  of homological $R$-isoperimetric function $f_n$ in the case that $R$ is a subring of $R$, called in that article the homological filling function and denoted by $FV_{G,R}^{n+1}$. In this case $FV_{G,R}^{n+1}$ yields an invariant that captures more than higher rank hyperbolicity. The modification of ~\cite[Definition 3.2]{MA20} in the case of $R=\bbQ$ is  
\begin{align*}
f_n(l)=\sup\{\|b\|\mid b\in K,|b|\leq l\}
\end{align*}
where $K$ is an \emph{integral part} of $\image(\partial_{n+1})$,  that is, $K$ is a $\bbZ G$-submodule of the $\bbQ G$-module $\image(\partial_{n+1})$, $K$ is finitely generated as a 
$\bbZ G$-submodule, and $K$ generates  $\image(\partial_{n+1})$ as an $\bbQ G$-module. For example, in the case that $G$ is a free abelian group of rank 2, the function $FV_{G,\bbQ}^2$ is quadratic.

\section{Proof of the theorem}

 Let $(R, |\cdot|)$ be a normed, division ring that respects the rationals.
 
Two norms $\|\cdot\|$ and $\|\cdot\|'$ on an $R$-module $M$ are said to be \emph{equivalent} if there exists a constant $C>0$ such that for all $m\in M$, $$C^{-1}\|m\|\leq \|m\|'\leq C\|m\|.$$ By Corollary 2.5 from \cite{MA20} any two filling norms on a finitely generated $R$-module $M$ are equivalent.
\begin{remark}
Let $R$ be a division ring and $N$ be an $R$-module. Let $M$ be a submodule of $N$, both finitely generated. Consider a filling norm $\|\cdot \|$ on $N$ induced by $\pi\colon F\rightarrow N$. Since $R$ is a division ring, there exists a retraction $r\colon N\to M$. Then $r\circ \pi\colon F\to M$ is surjective and hence defines a filling norm $\|\cdot \|'$ on $M$. By Corollary 2.5 from \cite{MA20}, $\|\cdot \|'$ and the restriction of $\|\cdot \|$ to $M$ are equivalent.
\end{remark}

\begin{lemma}\label{G:fin:FV:lin:bounded}
    If $G$ is a finite group, then the homological isoperimetric function $f_n$ over  $R$ is bounded by a linear function, in particular, $f_n$ takes only finite values. 
\end{lemma}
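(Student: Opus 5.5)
If $G$ is finite, then $RG$ is a finite-dimensional $R$-vector space, and so is every finitely generated $RG$-module. The plan is to reduce the statement to the fact that any two norms of the relevant kind on a finite-dimensional space are comparable. We use the Remark preceding the lemma for this, with $R$ playing the role of the division ring.

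First, fix an $n$-admissible projective resolution $(C_k,\partial_k)$. By \cite[Lemma 2.12]{BKV25} and the equivalence of filling norms, the growth class of $f_n$ does not depend on this choice. The modules $C_n$ and $C_{n+1}$ are finitely generated free $RG$-modules. Since $G$ is finite, they are finite-dimensional free $R$-modules with bases $\Lambda_n\times G$ and $\Lambda_{n+1}\times G$, and their $\ell_1$-norms are the standard ones for these $R$-bases. Set $N=C_n$ and $M=\image(\partial_{n+1})\subseteq N$. Both are finitely generated $R$-modules, because $N$ is a finite-dimensional vector space over the division ring $R$ and $M$ is a subspace of it.

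Next, we compare the two norms on $M$ as $R$-modules.
\begin{itemize}
\item The $\ell_1$-norm $|\cdot|$ on $N$ is the filling norm of the identity map $\mathrm{id}\colon N\to N$, viewing $N$ as a free $R$-module with basis $\Lambda_n\times G$.
\item By the Remark, the restriction of $|\cdot|$ to $M$ is equivalent to a filling norm on $M$ (over $R$).
\item The norm $\|\cdot\|$ from Definition~\ref{def:main} is the filling norm on $M$ induced by $\partial_{n+1}\colon C_{n+1}\to M$. Here $C_{n+1}$ is a free $R$-module with basis $\Lambda_{n+1}\times G$ and carries its $\ell_1$-norm. The infimum is the same whether we regard the map as $RG$-linear or $R$-linear, so this is also a filling norm over $R$.
\end{itemize}
By \cite[Corollary 2.5]{MA20}, any two filling norms on a finitely generated $R$-module are equivalent. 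Hence there is $C>0$ with $\|b\|\le C|b|$ for all $b\in M$. Therefore $f_n(l)\le Cl$, which is bounded by a linear function and in particular finite.

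The main point requiring care is the legitimacy of applying \cite[Corollary 2.5]{MA20} over $R$ rather than over $RG$, i.e.\ checking that its hypotheses (a normed ring, finitely generated modules) hold when the ring is $R$ itself. This is presumably why the Remark was stated for a division ring $R$. If that corollary needed extra hypotheses, I would instead argue directly. The approach would be to choose an $R$-basis $b_1,\dots,b_k$ of $M$ and preimages $c_i$ with $\partial_{n+1}(c_i)=b_i$. The remaining task is then to bound the coordinates of $b$ in this basis by a constant multiple of $|b|$, using the finite-dimensional linear algebra of a projection $N\to M$.
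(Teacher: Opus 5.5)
Your proof is correct and follows essentially the same route as the paper: you view $\image(\partial_{n+1})\subseteq C_n$ as finite-dimensional $R$-vector spaces, use the retraction from the Remark, and conclude that $\|b\|\le C|b|$ by the equivalence of filling norms from \cite[Corollary 2.5]{MA20}. The only difference is that the paper works with the cellular resolution of a $K(G,1)$ with finite skeleta, whereas you argue for an arbitrary $n$-admissible resolution; this makes your appeal to \cite[Lemma 2.12]{BKV25} unnecessary (and that lemma, as quoted in the paper, only transfers finiteness of $f_n$, not its growth class).
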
 
\begin{proof}
    Since $G$ is finite, there is a  $K(G,1)$ complex with finite $d$-skeleton for every $d$. Let $X$ be the universal cover of such complex, and  consider the  chain complex over $R$
\begin{equation}\label{fin:group:res}
\ldots \xrightarrow{\partial_3} C_2(X; R) \xrightarrow{\partial_2} C_1(X; R) \xrightarrow{\partial_1} C_0(X;R) \xrightarrow{\varepsilon} R\to 0.
\end{equation}
This is an $n$-admissible projective resolution of the trivial $RG$-module $R$.  
Since $C_{n+1}(X;R)$ is finitely generated free $RG$-module, we have that  $\image(\partial_{n+1})$   is a finitely generated  $RG$-module as well.  Since $G$ is finite,  $\image(\partial_{n+1})$ is a finitely generated $R$-module. As $R$ is a division ring, $C_{n}(X;R)$ retracts to $\image(\partial_{n+1})$. Then the $\ell_1$-norm $|\cdot|$ and the filling norm $\|\cdot \|$ over $\image(\partial_{n+1})$ are equivalent. Hence there is a constant $k\geq 0$ such that $\|z\|\leq k|z|$ for any $z\in \image(\partial_{n+1})$. This implies that $f_n(l)\leq k l$ for every $l$. 
\end{proof}

A $G$-set is \emph{cofinite} if it contains only finitely many $G$-orbits, and is  \emph{proper} if the stabilizer of each element is finite. 

\begin{lemma}\label{seq:neq:lin:bounded}
Suppose \( f:\mathbb{N}\to\mathbb{N} \) is not linearly bounded. Then there exists a strictly increasing sequence \( \{n_k\}_{k\in\mathbb{N}} \subseteq \mathbb{N} \) such that \( f(n_k) >k n_k \) for all \( k\in\mathbb{N} \).
\end{lemma}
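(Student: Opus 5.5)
We argue directly from the negation of linear boundedness and build the sequence recursively. The precise negation we use is: for every constant $C$ there is some $n\in\mathbb{N}$ with $f(n)>Cn+C$. Indeed, if for some $C$ no such $n$ existed, then $f(n)\le Cn+C$ for all $n$, and $f$ would be linearly bounded.

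The key step is to show that for each $k\in\mathbb{N}$ and each threshold $N\in\mathbb{N}$ there is some $n> N$ with $f(n)>kn$. Suppose instead that $f(n)\le kn$ for all $n>N$. Set
\[
M=\max\{f(0),\dots,f(N)\},\qquad C=\max\{k,M\}.
\]
For $n\le N$ we have $f(n)\le M\le Cn+C$, and for $n>N$ we have $f(n)\le kn\le Cn+C$. So $f$ would be linearly bounded, which is a contradiction.

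With this in hand, define the sequence recursively.
\begin{itemize}
\item Choose $n_1$ with $f(n_1)>n_1$, using the key step with $k=1$ and $N=0$.
\item Given $n_1<\dots<n_{k-1}$, apply the key step with constant $k$ and threshold $N=n_{k-1}$ to obtain $n_k>n_{k-1}$ with $f(n_k)>kn_k$.
\end{itemize}
This yields a strictly increasing sequence satisfying $f(n_k)>kn_k$ for all $k$. If the paper's convention puts $0\in\mathbb{N}$, the case $k=0$ asks for $f(n_0)>0$, which the key step with $k=1$ also supplies; then start the recursion there.

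The only delicate point is the key step: showing the failure of linear bounds cannot be confined to a finite initial segment. This is handled by absorbing the finitely many values $f(0),\dots,f(N)$ into the additive constant $C$.
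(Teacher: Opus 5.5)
Your proof is correct and uses the same recursive construction as the paper: each $n_k$ is chosen beyond $n_{k-1}$ with $f(n_k)>kn_k$. The only difference is that you negate the introduction's notion ($f(l)\le Cl+C$ for all $l$), which requires your extra step of absorbing $f(0),\dots,f(N)$ into the constant, whereas the paper's proof recalls an eventual definition ($f(n)\le Kn$ for $n\ge M$) whose negation directly gives witnesses beyond any threshold; the two notions are equivalent, so your version also matches how the lemma is applied in the main theorem.
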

\begin{proof}
Recall that \(l\colon\mathbb{N}\to\mathbb{N}\) is \emph{linearly bounded} if there exist integers \(K\) and \(M \) such that \(l(n)\le Kn\) for all \(n\ge M\). Since \(f\) is not linearly bounded, for every \(k,m\in\mathbb{N}\) there exists \(n\ge m\) with \(f(n)>k n\). We build \(\{n_k\}\) by induction. By taking \(m=1\) and \(k=1\) we obtain \(n_1\) with \(f(n_1)>n_1\). Suppose \(n_k\) has been chosen with \(f(n_k)>k n_k\). Using that \(f\) is not linearly bounded with \(m=n_k+1\) and \(k\) replaced by \(k+1\), we find \(n_{k+1}\ge n_k+1\) such that
\( 
f(n_{k+1})>(k+1)n_{k+1}.\)
\end{proof}

\begin{lemma}\label{pairwise:disjoint:cycles}
Let \( G \) be a finitely generated infinite group, and let \( S \) be a proper   \( G \)-set. If \( A, B \subseteq S \) are finite subsets, then there exists \( g \in G \) such that \( A \cap gB = \emptyset \).
\end{lemma}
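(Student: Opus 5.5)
We prove the statement by contradiction. Suppose that for every $g\in G$ the set $A\cap gB$ is nonempty. For each $g$ pick a witness: since $A\cap gB\neq\emptyset$, there are $a\in A$ and $b\in B$ with $a=gb$. So every $g\in G$ lies in a set of the form
\[
T_{a,b}=\{g\in G : gb=a\}, \qquad a\in A,\ b\in B,
\]
and hence $G=\bigcup_{a\in A,\,b\in B} T_{a,b}$. There are only finitely many pairs $(a,b)$, because $A$ and $B$ are finite.

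Next we show that each $T_{a,b}$ is finite. If $T_{a,b}$ is nonempty, fix some $g_0\in T_{a,b}$. For any other $g\in T_{a,b}$ we have $gb=a=g_0b$, so $g_0^{-1}g\in \operatorname{Stab}_G(b)$. Therefore $T_{a,b}=g_0\operatorname{Stab}_G(b)$, a single coset of the stabilizer. The stabilizer is finite because $S$ is a proper $G$-set, so $|T_{a,b}|=|\operatorname{Stab}_G(b)|<\infty$.

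Combining the two steps, $G$ is a finite union of finite sets and is therefore finite. This contradicts the hypothesis that $G$ is infinite, so some $g\in G$ satisfies $A\cap gB=\emptyset$. In fact the argument gives more: all but finitely many $g\in G$ have this property.

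The only step needing care is recognising $T_{a,b}$ as a coset of the finite stabilizer. The hypothesis that $G$ is finitely generated plays no role; only infiniteness of $G$ and properness of $S$ are used.
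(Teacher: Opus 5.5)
Your proof is correct and is essentially the paper's argument: the paper uses pigeonhole to get infinitely many distinct $g_i$ with $g_i b = a$ for one fixed pair $(a,b)$, so that the $g_1^{-1}g_i$ give infinitely many elements of $\operatorname{Stab}_G(b)$, while you phrase the same idea as ``$G$ is a finite union of cosets of finite stabilizers.'' Your version has two small advantages: it yields directly that all but finitely many $g$ work, and it makes explicit that finite generation of $G$ is never used.
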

\begin{proof}
 Suppose by contradiction that for every $g\in G$, $A\cap gB \neq \emptyset$. By the pigeon-hole arguent,  there is an infinite  sequence  $g_1,g_2,g_3,\ldots$ of distinct element of $G$ an  elements $a\in A$ and $b\in B$ such that $g_i.b=a$ for every $i$. In particular, $g_1^{-1}g_i$ belongs to the $G$-stabilizer of $b$. This implies $b$ has infinite $G$-stabilizer and hence   the $G$-action on $S$ is not proper.   
\end{proof}
 
\begin{proof}[Proof of Theorem \ref{thm:main}]
Since $G$ is {$(n+1)$-dimensional admissible over $R$}, there exists a projective resolution  $(C_k,\partial_k)$ of the trivial $RG$-module $R$ such that $C_n$ and $C_{n+1}$ are finitely generated proper permutation modules equipped with standard $\ell_1$-norms and such that $\partial_{n+1}\colon  C_{n+1} \to C_n$ is injective. 

Suppose by contradiction that \( f_n \) is not linearly bounded and fix $\epsilon>0$.  
By Lemma \ref{seq:neq:lin:bounded} there exists a  strictly increasing  sequence $\{n_k\}_{k\in\mathbb{N}}$ of positive integers such that $f_n(n_k)>kn_k$ for all $k\in\mathbb{N}$.
By definition of $f_n$, for each \( n_k \in \mathbb{N} \), there exists \( \alpha_k \in \image (\partial_{n+1}) \) such that
\[
 |\alpha_k |  \le n_k,  \qquad 
\|\alpha_k\|+\epsilon  = f_n(n_k) > k n_k.
\]
By definition of \( \|\cdot\| \), for each \( \alpha_k \) there exists \( \mu_k \in C_{n+1} \) such that
\[
\partial_{n+1}(\mu_k) = \alpha_k
\quad \text{and} \quad
 |\mu_k | + \epsilon > \|\alpha_k\|.
\]
For any positive integer $l$ define
$$ \nu_l := \frac{1}{l} \sum_{k=1}^l \frac{1}{n_k}\, \mu_k, $$
and observe that  
\[
\partial_{n+1}(\nu_l)
= \frac{1}{l} \sum_{k=1}^l \frac{1}{n_k}\, \alpha_k.
\]
Since \( f_n \) is not linearly bounded,   Lemma \ref{G:fin:FV:lin:bounded} implies that $G$ must be infinite. This allows us to apply Lemma~\ref{pairwise:disjoint:cycles}, which ensures that \( \alpha_k \) and \( \mu_k \) can be chosen such that the supports $\supp \alpha_i$ and $\supp \alpha_j$ are disjoint   if $i\neq j$, and similarly   
$\supp \mu_i \cap  \supp \mu_j =\emptyset$ if $i\neq j$. Indeed, this can be shown inductively, suppose that $\alpha_1,\ldots ,\alpha_k$ and $\mu_1,\ldots , \mu_k$ have been chosen so they have pairwise disjoint supports. Let 
\[
A=\supp\left(\sum_{i=1}^{k}\frac{1}{n_i}\alpha_i\right)\cup\supp\left(\sum_{i=1}^{k}\frac{1}{n_i}\mu_i\right)  \text{ and } B=\supp\left(\frac{1}{n_{k+1}}\alpha_{k+1}\right)\cup\supp\left(\frac{1}{n_{k+1}}\mu_{k+1}\right).\] Since $A,B\subseteq S_1\sqcup S_2$ are finite subsets, by Lemma~\ref{pairwise:disjoint:cycles} there exists $g\in G$ such that $A\cap gB=\emptyset$. Replace $\alpha_{k+1},\mu_{k+1}$ by $g\alpha_{k+1},g\mu_{k+1}$ respectively, and observe that their $\ell_1$ norms and filling norms do not change since they are $G$-invariant. Hence we have chosen new $\alpha_n$ and $\mu_n$ with the required property.

Since the \( \mu_k \) have pairwise disjoint supports and 
$|\mu_k|>kn_k-2\epsilon$ we have  
\[
|\nu_l| 
= \frac{1}{l} \sum_{k=1}^l \frac{1}{n_k}  |\mu_k | 
\ge \frac{1}{l} \sum_{k=1}^l \frac{1}{n_k}  (k n_k-2\epsilon) 
= \frac{1}{l} \sum_{k=1}^l k - 2\epsilon
= \frac{l+1}{2}-2\epsilon,
\]
where the first equality holds since the normed ring $R$ respects the rationals.
On the other hand, since  $\partial_{n+1}$   is injective   \[
\|\partial_{n+1}(\nu_l)\|  = |\nu_l|,
\]
and since the supports of all $\alpha_k$ are pairwise disjoint, 
\[
|\partial_{n+1}(\nu_l)| = \frac{1}{l}\sum_{k=1}^l\frac{1}{n_k} |\alpha_k | 
\le \frac{1}{l}\sum_{k=1}^l\frac{1}{n_k}n_k = 1,
\]
where the first equality holds again since the normed ring $R$ respects the rationals.
Summarizing, we have that
\[
|\partial_{n+1}(\nu_l)| \le 1
\quad \text{and} \quad
\|\partial_{n+1}(\nu_l)\| \ge \frac{l+1}{2}-2\epsilon.
\]
for every $l$
and therefore
\[
f_n(1) = \infty,
\]
contradicting the assumption that \( f_n \) is takes only finite values. 
 Hence \( f_n \) must be linearly bounded.
\end{proof}

\AtNextBibliography{\scriptsize}
\printbibliography

@article{BKV25,
  author  = {Bader, Shaked and Kropholler, Robert and Vankov, Vladimir},
  title   = {Subgroups of word hyperbolic groups in dimension 2 over arbitrary rings},
  journal = {Journal of the London Mathematical Society},
  volume  = {112},
  number  = {1},
  pages   = {e70230},
  year    = {2025},
  doi     = {10.1112/jlms.70230},
  url     = {https://doi.org/10.1112/jlms.70230}
}

@article {MA20,
    AUTHOR = {Arora, Shivam and Martínez-Pedroza, Eduardo},
     TITLE = {Subgroups of word hyperbolic groups in rational dimension 2},
   JOURNAL = {Groups Geom. Dyn.},
  FJOURNAL = {Groups, Geometry, and Dynamics},
    VOLUME = {15},
      YEAR = {2021},
    NUMBER = {1},
     PAGES = {83--100},
   MRCLASS = {20F67 (20F65 20J05 57M60 57S30)},
  MRNUMBER = {4235748},
MRREVIEWER = {Wenyuan\ Yang},
       DOI = {10.4171/ggd/592},
       %URL = {https://doi.org/10.4171/ggd/592},
    %ISSN = {1661-7207,1661-7215},
}

@misc{W26,
      title={Quasi-isometry Invariance of discrete Higher Filling functions}, 
      author={Jannis Weis},
      year={2026},
      eprint={2601.15140},
      archivePrefix={arXiv},
      primaryClass={math.GR},
      url={https://arxiv.org/abs/2601.15140}
}

@article {Mi02,
    AUTHOR = {Mineyev, Igor},
     TITLE = {Bounded cohomology characterizes hyperbolic groups},
   JOURNAL = {Q. J. Math.},
  FJOURNAL = {The Quarterly Journal of Mathematics},
    VOLUME = {53},
      YEAR = {2002},
    NUMBER = {1},
     PAGES = {59--73},
      ISSN = {0033-5606},
   MRCLASS = {20J06 (20F67)},
  MRNUMBER = {1887670},
       DOI = {10.1093/qjmath/53.1.59},
       URL = {https://doi-org.qe2a-proxy.mun.ca/10.1093/qjmath/53.1.59},
}

@book {brown,
    AUTHOR = {Brown, Kenneth S.},
     TITLE = {Cohomology of groups},
    SERIES = {Graduate Texts in Mathematics},
    VOLUME = {87},
 PUBLISHER = {Springer-Verlag, New York-Berlin},
      YEAR = {1982},
     PAGES = {x+306},
   MRCLASS = {20-02 (18-01 20F32 20J05 55-01)},
  MRNUMBER = {672956},
MRREVIEWER = {Ross\ Staffeldt},
DOI={https://doi.org/10.1007/978-1-4684-9327-6},
NOTE={2012 reprint.}
%ISBN = {0-387-90688-6},
}

@phdthesis{Fle98,
  author  = {Fletcher, James Louis},
  title   = {Homological group invariants},
  school  = {University of Utah},
  year    = {1998},
  note    = {Available at \url{https://search.proquest.com/docview/304455841}},
  type    = {Ph.{D}. thesis}
}

@article{You11,
  author  = {Young, Robert},
  title   = {Homological and homotopical higher-order filling functions},
  journal = {Groups, Geometry, and Dynamics},
  volume  = {5},
  number  = {3},
  pages   = {683--690},
  year    = {2011},
  publisher = {EMS Press},
  doi     = {10.4171/GGD/140},
  url     = {https://ems.press/journals/ggd/articles/4568}
}

@article {FlemMart,
    AUTHOR = {Fleming, Joshua W. and Martínez-Pedroza, Eduardo},
     TITLE = {Finiteness of homological filling functions},
   JOURNAL = {Involve},
  FJOURNAL = {Involve. A Journal of Mathematics},
    VOLUME = {11},
      YEAR = {2018},
    NUMBER = {4},
     PAGES = {569--583},
   MRCLASS = {20F65 (16P99 20J05 57M07)},
  MRNUMBER = {3778913},
MRREVIEWER = {Philippe\ Malbos},
       DOI = {10.2140/involve.2018.11.569},
       %URL = {https://doi.org/10.2140/involve.2018.11.569},
    %ISSN = {1944-4176,1944-4184},
}
\end{document}